\newtheorem{thm}{Theorem}[section]
\newtheorem{theorem}[thm]{Theorem}
\newtheorem{corollary}[thm]{Corollary}
\newtheorem{problem}[thm]{Problem}
\newtheorem{lemma}[thm]{Lemma}
\newtheorem{definition}[thm]{Definition}
\theoremstyle{remark}
\newcommand{\RR}{\mathbb R}
\newcommand{\CC}{\mathbb C}
\newcommand{\spn}{\textup{span}}
\newcommand{\pc}[1]{{\color{magenta}{#1}}}
\newcommand{\dg}[1]{{\color{blue}{#1}}}
\begin{document}

\title{Phase retrieval by projections in $\RR^n$ requires ${\large \bf{2n-2}}$ projections\\
(This paper has a gap)}
\author[Casazza, Ghoreishi
 ]{Peter G. Casazza and Dorsa Ghoreishi}
\address{Department of Mathematics, University
of Missouri, Columbia, MO 65211-4100}
\address{}

\thanks{The authors were supported by
 NSF DMS 1609760 and 1906725}

\email{Casazzap@missouri.edu}
\email{dorsa.ghoreishi@slu.edu}

\subjclass{42C15}

\begin{abstract}
We will answer the most significant open problem in real phase retrieval by projections by showing it requires at least
$2n-2$ projections to do phase retrieval in $\RR^n$.
\end{abstract}

\maketitle

\section{Introduction}

Phase retrieval is one of the most active and applied subjects in mathematics today, with applications to pure mathematics, applied mathematics, engineering, medicine, computer science, and more.  Frame theory is fundamental to the digitalizing of information for processing and storing.  Phase retrieval has applications to X-ray crystallography, electron microscopy, astronomical imaging, optics, quantum physics, and much more.  Phase retrieval will even be needed to align the mirrors in the new James Webb Space Telescope. 

\vspace{0.2 in}

\begin{definition}
A family of vectors $\{\phi_i\}_{i=1}^m$ in $\RR^n$ or $\CC^n$ does {\bf phase retrieval} if for any non-zero $x,y\in \RR^n$ satisfying $|\langle x,\phi_i\rangle|
= |\langle y,\phi_i\rangle|$ for all $i\in [m]$ this will imply $x=cy$ with $|c|=1$.
\end{definition}
A fundamental idea here is:

\begin{definition} A family of vectors $\{\phi_i\}_{i=1}^m$ in $\RR^n$ has the {\bf complement property} if for all subsets $I\subset [m]$,
either \, $\spn \{\phi_i\}_{i\in I}=\RR^n$ or \ $\spn\{\phi_i\}_{i\in I^c}=\RR^n$.
\end{definition}

The most important classification of phase retrieval in $\RR^n$ appeared in \cite{BCE}.

\begin{theorem}
A family of vectors $\{\phi_i\}_{i=1}^m$ does phase retrieval in $\RR^n$ if and only if it has the complement property.
\end{theorem}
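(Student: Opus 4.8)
The plan is to prove both implications by contraposition, exploiting one elementary observation: over $\RR$, the condition $|\langle x,\phi_i\rangle| = |\langle y,\phi_i\rangle|$ is equivalent to $\langle x-y,\phi_i\rangle = 0$ or $\langle x+y,\phi_i\rangle = 0$, and the scalar condition $|c|=1$ simply means $c=\pm 1$. The bridge between the two properties is the substitution $x = u+v$, $y = u-v$ (with inverse $u = \tfrac12(x+y)$, $v = \tfrac12(x-y)$), which turns a pair of orthogonality witnesses into a pair of vectors with matching measurements, and vice versa. Everything below is just bookkeeping around this correspondence.

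First I would prove the direction ``complement property $\Rightarrow$ phase retrieval'' by assuming phase retrieval fails. Then there are nonzero $x,y\in\RR^n$ with $|\langle x,\phi_i\rangle| = |\langle y,\phi_i\rangle|$ for every $i\in[m]$, yet $x\ne \pm y$. Set $u=x-y$ and $v=x+y$; these are nonzero exactly because $x\ne y$ and $x\ne -y$. Split $[m]$ via $I=\{\,i:\langle x,\phi_i\rangle = \langle y,\phi_i\rangle\,\}$: for $i\in I$ we have $\langle u,\phi_i\rangle = 0$, while for $i\in I^c$ the equal magnitudes force $\langle x,\phi_i\rangle = -\langle y,\phi_i\rangle$, i.e. $\langle v,\phi_i\rangle = 0$. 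Thus the nonzero vector $u$ is orthogonal to $\spn\{\phi_i\}_{i\in I}$ and the nonzero vector $v$ is orthogonal to $\spn\{\phi_i\}_{i\in I^c}$, so neither span equals $\RR^n$ and the complement property fails.

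For the reverse direction, ``phase retrieval $\Rightarrow$ complement property,'' I would again argue contrapositively: assume the complement property fails, so there is $I\subset[m]$ with $\spn\{\phi_i\}_{i\in I}\ne \RR^n$ and $\spn\{\phi_i\}_{i\in I^c}\ne \RR^n$. Pick nonzero $u\perp \spn\{\phi_i\}_{i\in I}$ and nonzero $v\perp \spn\{\phi_i\}_{i\in I^c}$, and set $x=u+v$, $y=u-v$. A one-line check gives $|\langle x,\phi_i\rangle| = |\langle y,\phi_i\rangle|$ for all $i$: for $i\in I$ both inner products equal $\pm\langle v,\phi_i\rangle$, and for $i\in I^c$ both equal $\langle u,\phi_i\rangle$. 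It then remains to exhibit a genuine counterexample to phase retrieval.

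The hard part — indeed the only place needing care — is guaranteeing the output pair is a legitimate witness, namely $x,y$ nonzero with $x\ne\pm y$, since the definition only speaks about nonzero inputs. When $u,v$ are linearly independent this is automatic: $x=u+v$ and $y=u-v$ are nonzero, $x=y$ would force $v=0$, and $x=-y$ would force $u=0$. The degenerate case is $v=\lambda u$ with $\lambda\ne 0$, where $u$ turns out orthogonal to both $\{\phi_i\}_{i\in I}$ and $\{\phi_i\}_{i\in I^c}$, hence to every $\phi_i$; here I would instead take $x=u$ and $y=2u$, which are nonzero with all measurements zero but satisfy $x=\tfrac12 y$, so $|c|=\tfrac12\ne 1$ and phase retrieval fails. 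Either way the implication closes, and I would keep this linearly-dependent subcase explicit precisely so that the ``nonzero, not $\pm$-equal'' hypothesis is never assumed silently.
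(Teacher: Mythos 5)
Your proof is correct. Note that the paper itself offers no proof of this statement: it is quoted as the classification theorem of Balan, Casazza, and Edidin and cited to \cite{BCE}, and your argument is essentially the standard one from that reference --- the substitution $u=x-y$, $v=x+y$ (equivalently $x=u+v$, $y=u-v$), with the index set split by the sign of agreement between $\langle x,\phi_i\rangle$ and $\langle y,\phi_i\rangle$, carries each failure of one property to a failure of the other. Your explicit handling of the degenerate case $v=\lambda u$, where $x=(1+\lambda)u$ or $y=(1-\lambda)u$ could vanish and you instead exhibit the witness pair $x=u$, $y=2u$ annihilated by every $\phi_i$, is a detail that sketch-level versions of this argument often pass over silently, and you resolve it correctly.
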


\begin{corollary}
If a family of vectors $\{\phi_i\}_{i=1}^m$ satisfies phase retrieval in $\RR^n$, then $m\ge 2n-1$.
\end{corollary}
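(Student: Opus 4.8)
The plan is to invoke the preceding classification theorem to convert the quantitative statement about $m$ into a statement about the complement property, and then to argue by contradiction. Since phase retrieval is equivalent to the complement property, it suffices to show that any family with $m \le 2n-2$ fails the complement property; the corollary then follows by contraposition.

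First I would assume $m \le 2n-2$ and produce a single bad subset $I \subset [m]$. The key combinatorial observation is that $2n-2 = (n-1)+(n-1)$, so whenever $m \le 2n-2$ we may partition the index set $[m]$ into two blocks $I$ and $I^c$ with $|I| \le n-1$ and $|I^c| \le n-1$. Concretely, list the indices and take $I$ to be any $\lfloor m/2 \rfloor$ of them; since $m \le 2n-2$, both $|I|$ and $|I^c|$ are at most $n-1$.

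Next I would apply the elementary linear-algebra fact that a collection of at most $n-1$ vectors spans a subspace of dimension at most $n-1$, hence cannot equal $\RR^n$. Applying this to both blocks gives $\spn\{\phi_i\}_{i\in I} \ne \RR^n$ and $\spn\{\phi_i\}_{i\in I^c} \ne \RR^n$ simultaneously. This directly contradicts the defining condition of the complement property, which demands that at least one of the two spans fill $\RR^n$. Therefore no family with $m \le 2n-2$ can have the complement property, and by the theorem no such family can do phase retrieval; equivalently, any family doing phase retrieval must satisfy $m \ge 2n-1$.

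I do not expect a genuine obstacle here, as the argument is short and rests entirely on the cited equivalence plus a counting/dimension count. The one point requiring a little care is verifying that the balanced partition into two blocks of size at most $n-1$ is always available under the hypothesis $m \le 2n-2$; this is immediate from the arithmetic above but is the only place where the specific bound $2n-2$ (rather than, say, $2n-1$) is used.
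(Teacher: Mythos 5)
Your proof is correct and is exactly the argument the paper intends: the corollary is stated there without proof as an immediate consequence of the complement property theorem, and your partition of $[m]$ into two blocks of size at most $n-1$ (possible precisely when $m \le 2n-2$), neither of which can span $\RR^n$, is the standard way to make that implication explicit. No gaps; the arithmetic check $\lfloor m/2\rfloor \le n-1$ and $\lceil m/2\rceil \le n-1$ is the only point of care and you handled it.
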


In some applications a signal must be reconstructed from the norms of higher dimensional components. In X-ray crystallography
such a problem arises in crystal twinning \cite{Cr}. In this scenario, there exists a similar phase retrieval problem.

\begin{definition}
A family of subspaces $\{W_i\}_{i=1}^m$ in $\RR^n$or $\CC^n$ with respective orthogonal projections $\{P_i\}_{i=1}^m$ satisfy {\bf phase retrieval}
if for all $x,y\in \RR^n$, $\|P_ix\|=\|P_iy\|$ for all $i\in [m]$, implies $x=cy$ with $|c|=1$.
\end{definition}

A deep study of phase retrieval by projections appears in \cite{CCJ}. This includes the result:

\begin{theorem}
For any $0<r_i<n$, $i \in [2n-1]$ there are subspaces $\displaystyle \{W_i\}_{i=1}^{2n-1}$ in $\RR^n$ doing phase retrieval with {\textup{dim}} $W_i=r_i$.
\end{theorem}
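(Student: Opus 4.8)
The plan is to obtain the subspaces non-constructively, by showing that a generic choice of subspaces of the prescribed dimensions does phase retrieval. First I would record a usable criterion. For $x,y \in \RR^n$ set $u = x-y$ and $v = x+y$; since each $P_i$ is a self-adjoint projection, polarization gives
\[
\|P_i x\|^2 - \|P_i y\|^2 = \langle P_i u, v\rangle .
\]
Because $x,y$ are linearly independent exactly when $u,v$ are, the family $\{W_i\}_{i=1}^{m}$ \emph{fails} phase retrieval if and only if there exist linearly independent $u,v \in \RR^n$ with $\langle P_i u, v\rangle = 0$ for all $i \in [m]$. I will show that for $m = 2n-1$ this failure does not occur for generic $(W_1,\dots,W_m)$.

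Next I would set up the parameter space. Tuples of subspaces with $\dim W_i = r_i$ form the product of Grassmannians $G := \prod_{i=1}^{2n-1} \mathrm{Gr}(r_i,n)$, an irreducible variety of dimension $\sum_{i=1}^{2n-1} r_i(n-r_i)$; the orthogonal projection $P_i$ is a regular matrix-valued function of $W_i$, so $(W_i) \mapsto \langle P_i u, v\rangle$ is regular on $G$ for fixed $u,v$. Consider the incidence variety
\[
\Sigma = \bigl\{ ([u],[v],(W_i)_i) : u,v \text{ linearly independent},\ \langle P_i u, v\rangle = 0 \ \forall i \bigr\}
\]
inside $(\bb{P}^{n-1}\times\bb{P}^{n-1}) \times G$, with projections $\pi$ to $\bb{P}^{n-1}\times\bb{P}^{n-1}$ and $\rho$ to $G$. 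By the criterion, a tuple $(W_i)$ fails phase retrieval precisely when it lies in the image $\rho(\Sigma)$, so it suffices to prove $\dim \rho(\Sigma) < \dim G$.

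I would bound $\dim\Sigma$ by fibering over $\pi$. The base, the open set of linearly independent pairs of lines, has dimension $2(n-1)$. The key point — and the step I expect to be the main obstacle — is that for \emph{each fixed} linearly independent pair $u,v$ the single equation $\langle P_{W_i} u, v\rangle = 0$ cuts out a hypersurface of codimension exactly $1$ in $\mathrm{Gr}(r_i,n)$; equivalently the regular function $W \mapsto \langle P_W u, v\rangle$ is not identically zero. This I would verify by exhibiting one subspace on which it is nonzero: if $\langle u,v\rangle \neq 0$ take any $W \ni u$, so that $P_W u = u$; and if $u\perp v$ take $W$ spanned by $(u+v)/\|u+v\|$ together with $r_i-1$ orthonormal vectors in $u^\perp\cap v^\perp$ (available since $r_i \le n-1$), for which a direct computation gives $\langle P_W u,v\rangle = \|u\|^2\|v\|^2/\|u+v\|^2 \neq 0$. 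Granting this, every fiber of $\pi|_\Sigma$ has dimension $\sum_{i}\bigl(r_i(n-r_i)-1\bigr) = \sum_i r_i(n-r_i) - (2n-1)$, whence
\[
\dim\Sigma \le 2(n-1) + \sum_{i} r_i(n-r_i) - (2n-1) = \sum_i r_i(n-r_i) - 1 .
\]

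Finally, since $\dim\rho(\Sigma) \le \dim\Sigma \le \dim G - 1 < \dim G$ and $G$ is irreducible, the image $\rho(\Sigma)$ cannot be dense, so its complement is a nonempty Zariski-open set. Any tuple $(W_1,\dots,W_{2n-1})$ in this complement has $\dim W_i = r_i$ and does phase retrieval, which proves the theorem. It is worth noting exactly where the count $2n-1$ enters: the subtraction $2(n-1)-(2n-1) = -1$ is what forces $\dim\Sigma$ strictly below $\dim G$, and with only $2n-2$ subspaces the analogous bound degrades to $\dim G$, so this argument is sharp and does not by itself rule out phase retrieval with fewer projections — consistent with the theme of the paper.
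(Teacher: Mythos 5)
This theorem is quoted in the paper as background from \cite{CCJ}; the paper itself contains no proof of it, so your argument can only be measured against the literature, not against an internal proof. Taken on its own terms, your generic dimension count is a sound and essentially complete route to the statement, and a self-contained alternative to the treatment in \cite{CCJ}. Two points need repair before it is airtight. First, in your criterion the direction you actually use --- failure of phase retrieval yields a \emph{linearly independent} pair $(u,v)$ --- does not follow from the remark that $x,y$ are independent exactly when $u,v$ are, because a failure witness need not have $x,y$ independent: if $y=tx$ with $t\neq 0,\pm 1$ (or if $y=0$), then $u=(1-t)x$ and $v=(1+t)x$ are proportional. The fix is short: in that case $0=\langle P_iu,v\rangle=(1-t^2)\|P_ix\|^2$ forces $P_ix=0$ for every $i$, and then $(u',v')=(x,w)$ is an independent witness for any $w\notin \RR x$, which exists since $0<r_i<n$ forces $n\ge 2$. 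Second, your algebro-geometric vocabulary is complex-flavored while the objects are real: over $\RR$ the image $\rho(\Sigma)$ need not be Zariski-closed, nor need its complement be Zariski-open. The skeleton survives verbatim in the semialgebraic category, though: in the projection-matrix model of the Grassmannian the function $W\mapsto \langle P_Wu,v\rangle = v^{T}P_Wu$ is a polynomial; your two witness computations (both correct, including the orthogonal case $u\perp v$) show it is not identically zero; since $\mathrm{Gr}(r_i,n)$ is a connected smooth real algebraic set, its zero locus has dimension at most $r_i(n-r_i)-1$; the semialgebraic fiber-dimension theorem then gives $\dim\Sigma\le \dim G-1$; semialgebraic maps do not raise dimension; and since the Grassmannians and projective spaces are compact, $\rho(\overline{\Sigma})$ is closed of dimension at most $\dim G-1$, so its complement is nonempty, open, and dense in the Euclidean topology. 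Every tuple in that complement does phase retrieval, which is what was wanted.

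Your closing observation is also worth keeping: the cancellation $2(n-1)-(2n-1)=-1$ is exactly where $m=2n-1$ enters, and with $m=2n-2$ the same count only yields $\dim\Sigma\le\dim G$, so the genericity argument is silent there --- consistent with this paper's main claim that $2n-2$ projections are necessary and with the open problem of when exactly $2n-2$ suffice.
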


A fundamental result in this area from \cite{CCJ} is:

\begin{theorem}
Let $\{W_i\}_{i=1}^m$ be subspaces of $\RR^n$. The following are equivalent:
\begin{enumerate}
\item $\{W_i\}_{i=1}^m$ does phase retrieval.
\item Whenever we choose orthonormal bases $\{u_{ij}\}_{j=1}^{I_i}$ for $W_i$, the family of  vectors$\{u_{ij}\}_{i=1,j=1}^{ \ m\ \ I_i}$ does
phase retrieval.
\end{enumerate}
\end{theorem}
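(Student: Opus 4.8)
The plan is to prove the equivalence by relating the projection condition to an inner-product condition with the orthonormal bases. The key observation is that for any vector $x$ and any subspace $W_i$ with orthonormal basis $\{u_{ij}\}_{j=1}^{I_i}$, we have $\|P_ix\|^2 = \sum_{j=1}^{I_i}|\langle x,u_{ij}\rangle|^2$, since the orthogonal projection onto $W_i$ can be written as $P_ix = \sum_{j=1}^{I_i}\langle x,u_{ij}\rangle u_{ij}$ and the basis is orthonormal. This identity is independent of which orthonormal basis we pick, which is exactly why the statement quantifies over all choices of bases. I would open with this computation, as it is the bridge between the two formulations.

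For the direction $(2)\Rightarrow(1)$, I would argue contrapositively or directly: suppose some particular orthonormal bases give a phase-retrieval family $\{u_{ij}\}$. Given $x,y$ with $\|P_ix\|=\|P_iy\|$ for all $i$, I want to conclude $x=cy$. The difficulty is that equality of the sums $\sum_j|\langle x,u_{ij}\rangle|^2 = \sum_j|\langle y,u_{ij}\rangle|^2$ does not immediately give the termwise equalities $|\langle x,u_{ij}\rangle|=|\langle y,u_{ij}\rangle|$ that the phase-retrieval hypothesis on the vectors needs. So the implication $(2)\Rightarrow(1)$ as literally stated is the cleaner direction only if one is careful: in fact if the \emph{vectors} do phase retrieval, then termwise sign information is recoverable, and one shows directly that $\|P_ix\|=\|P_iy\|$ forces $x=\pm y$ by using that the aggregated vector family already separates the phase of $x$. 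I would handle this by noting that the vector phase-retrieval conclusion $x=cy$ with $|c|=1$ follows once we know the vectors satisfy complement property, and complement property is a statement about spans, which the next paragraph makes basis-independent.

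For the direction $(1)\Rightarrow(2)$, the heart is to show that the projection phase-retrieval property transfers to \emph{any} choice of orthonormal bases. Here I would invoke the complement-property characterization (the classification theorem stated above): the vector family $\{u_{ij}\}$ does phase retrieval if and only if for every partition of the index set into $S$ and $S^c$, one of the two spans is all of $\RR^n$. The crucial point is that $\spn\{u_{ij}\}_{j=1}^{I_i} = W_i$ regardless of which orthonormal basis of $W_i$ we chose, so the span of any collection of these basis vectors, grouped by subspace, depends only on the subspaces $W_i$ and not on the basis. This makes condition (2) genuinely basis-independent, reducing it to a statement about the subspaces themselves. The main obstacle, and the step I would spend the most care on, is handling \emph{mixed} subsets $S$ that take only some of the basis vectors from a given $W_i$ rather than all of them; I would need to show that allowing such splits does not give the adversary more power, i.e. that a counterexample using a mixed split can be converted into a counterexample respecting the subspace grouping. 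Once that reduction is secured, the equivalence follows by applying the classification theorem simultaneously to the projection formulation and to every basis-induced vector family.
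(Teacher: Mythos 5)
Your proposal has the two directions' difficulty inverted, and the direction that actually carries the content is left unproved. The implication $(1)\Rightarrow(2)$ is immediate from the identity you open with: if $|\langle x,u_{ij}\rangle|=|\langle y,u_{ij}\rangle|$ for all $i,j$, then $\|P_ix\|^2=\sum_{j=1}^{I_i}|\langle x,u_{ij}\rangle|^2=\sum_{j=1}^{I_i}|\langle y,u_{ij}\rangle|^2=\|P_iy\|^2$ for every $i$, and (1) gives $x=\pm y$ directly --- no complement property and no subset analysis is needed. Routing this direction through the complement property is not only unnecessary, it stalls at exactly the obstacle you name and then defer: for a \emph{mixed} subset $S$ taking only part of a basis of some $W_i$, the span $\spn\{u_{ij}:(i,j)\in S\}$ genuinely depends on which orthonormal basis of $W_i$ was chosen (only the full span equals $W_i$ independently of the basis), and the promised reduction of mixed splits to subspace-respecting splits is never carried out; there is no straightforward such reduction, which is precisely why the theorem must quantify over all choices of bases.

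The genuine gap is in $(2)\Rightarrow(1)$, where the key idea is replaced by phrases such as ``termwise sign information is recoverable'' and ``the aggregated family already separates the phase,'' which are assertions, not arguments: the complement property of a fixed basis family says nothing directly about a pair $x,y$ satisfying only the aggregate equalities $\|P_ix\|=\|P_iy\|$. The standard argument (this is how the result is proved in \cite{CCJ}; the present paper only cites it) constructs an \emph{adapted} basis for the given pair. Suppose $\|P_ix\|=\|P_iy\|$ for all $i$. Then $\langle P_ix+P_iy,\,P_ix-P_iy\rangle=\|P_ix\|^2-\|P_iy\|^2=0$, so $P_ix+P_iy$ and $P_ix-P_iy$ are orthogonal vectors in $W_i$; choose an orthonormal basis of $W_i$ whose first vectors lie along these two directions (omitting whichever is zero in the degenerate cases) and whose remaining vectors are orthogonal to both $P_ix$ and $P_iy$. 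Since $u_{ij}\in W_i$ and $P_i$ is self-adjoint, $\langle x,u_{ij}\rangle=\langle P_ix,u_{ij}\rangle$, and one checks $\langle P_ix,u_{i1}\rangle=\langle P_iy,u_{i1}\rangle$, $\langle P_ix,u_{i2}\rangle=-\langle P_iy,u_{i2}\rangle$, with all other inner products vanishing; hence $|\langle x,u_{ij}\rangle|=|\langle y,u_{ij}\rangle|$ for every $j$. Applying hypothesis (2) to this \emph{particular} choice of bases yields $x=\pm y$. Without this construction (or an equivalent device), your sketch of $(2)\Rightarrow(1)$ does not close.
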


\vspace{0.1 in}
Edidin \cite{E} proved a fundamental
result concerning phase retrieval by projections (See \cite{C} for an elementary proof of this resut.)

\vspace{0.1 in}

\begin{theorem}\label{edidin}
A family of projections $\{P_i\}_{i=1}^m$ does phase retrieval in $\RR^n$ if and only if for every $0\not= x\in \RR^n$,
$\spn\{P_ix\}_{i=1}^m=\RR^n$.
\end{theorem}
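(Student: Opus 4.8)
The plan is to prove Theorem~\ref{edidin}, Edidin's characterization, by relating the projection condition directly to the complement property and its generalization to subspaces.

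\textbf{The approach.} I would prove both directions by contraposition, exploiting the identity that for an orthogonal projection $P_i$ onto $W_i$, we have $\|P_i x\| = \|P_i y\|$ precisely when $\langle P_i x, P_i x\rangle = \langle P_i y, P_i y\rangle$, i.e. $\langle P_i(x+y), P_i(x-y)\rangle = 0$, using that $P_i$ is self-adjoint and idempotent so $\|P_i x\|^2 - \|P_i y\|^2 = \langle P_i(x-y), x+y\rangle$. Setting $u = x+y$ and $v = x-y$, the phase retrieval failure (finding $x \neq \pm y$ with $\|P_i x\| = \|P_i y\|$ for all $i$) becomes equivalent to finding nonzero, linearly independent $u,v$ with $\langle P_i u, v\rangle = 0$ for all $i$. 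This is the key reformulation that makes the spanning condition visible.

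\textbf{Key steps in order.} First I would record the algebraic identity above, noting that $x \neq \pm y$ corresponds exactly to $u \neq 0$ and $v \neq 0$. Second, for the forward direction (phase retrieval $\Rightarrow$ spanning), I would argue by contrapositive: suppose there is a nonzero $v \in \RR^n$ with $\spn\{P_i v\}_{i=1}^m \neq \RR^n$. Then choose $0 \neq u$ orthogonal to every $P_i v$; since $P_i$ is self-adjoint, $\langle P_i u, v\rangle = \langle u, P_i v\rangle = 0$ for all $i$. Setting $x = \tfrac{1}{2}(u+v)$ and $y = \tfrac{1}{2}(u-v)$ recovers a pair with $\|P_i x\| = \|P_i y\|$ for all $i$ but $x \neq \pm y$, contradicting phase retrieval. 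Third, for the reverse direction, I would run the same computation backwards: if phase retrieval fails, the reformulation produces linearly independent $u,v$ with $\langle u, P_i v\rangle = 0$ for all $i$, so $u \perp \spn\{P_i v\}$ and hence $\spn\{P_i v\}_{i=1}^m \neq \RR^n$, contradicting the hypothesis.

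\textbf{Main obstacle.} The technical care lies entirely in the bookkeeping of the $\pm$ sign: phase retrieval requires $x = cy$ with $|c| = 1$, which in $\RR^n$ means $x = \pm y$, and I must check that ``$x \neq \pm y$'' is genuinely equivalent to ``both $u = x+y \neq 0$ and $v = x-y \neq 0$'' and, in the reverse direction, that linear independence of $u$ and $v$ rules out both $x = y$ and $x = -y$ simultaneously. The underlying linear algebra (existence of a vector orthogonal to a proper subspace, self-adjointness of $P_i$) is routine; the only place to be vigilant is confirming that the substitution $x = \tfrac12(u+v)$, $y = \tfrac12(u-v)$ is a bijection between the two problem formulations and preserves the nonzero/nondegeneracy conditions in both directions.
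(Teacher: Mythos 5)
Theorem~\ref{edidin} is stated in the paper without proof: it is imported from Edidin \cite{E}, with an elementary proof attributed to \cite{C}, so there is no in-paper argument to compare against. Your polarization proof is correct, and it is precisely the standard elementary route: the identity $\|P_ix\|^2-\|P_iy\|^2=\langle P_i(x+y),x-y\rangle$, valid since each $P_i$ is self-adjoint and idempotent, converts the measurement condition into the orthogonality $\langle u,P_iv\rangle=0$ for $u=x+y$, $v=x-y$, and both directions then reduce to the existence (or not) of a nonzero vector orthogonal to $\spn\{P_iv\}_{i=1}^m$. One wording error to fix: the failure of phase retrieval is equivalent to the existence of \emph{nonzero} $u,v$ with $\langle u,P_iv\rangle=0$ for all $i$, not \emph{linearly independent} $u,v$. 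Independence is not guaranteed: if $P_iy=0$ for all $i$ and $x=2y\neq 0$, then $\|P_ix\|=\|P_iy\|$ for all $i$ and $x\neq\pm y$, yet $u=3y$ and $v=y$ are parallel. Fortunately your argument never uses independence: in the reverse direction $u\neq 0$ together with $u\perp\spn\{P_iv\}_{i=1}^m$ already forces $\spn\{P_iv\}_{i=1}^m\neq\RR^n$ for the nonzero vector $v$, and in the forward direction $u\neq 0$ and $v\neq 0$ already give $x\neq\pm y$ (note that even if one of $x=\frac{1}{2}(u+v)$, $y=\frac{1}{2}(u-v)$ happens to be zero, the pair still violates the definition, since the paper's definition for projections quantifies over all $x,y$). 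With ``linearly independent'' replaced by ``both nonzero,'' the proof is complete and matches the known elementary argument.
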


\vspace{0.1 in}
The following theorem in \cite{X} shows a particular case of $\RR^4$ in which phase retrieval is possible with $2n-2= 2(4)-2=6$ subspaces.

\vspace{0.1 in}
\begin{theorem}
There are 6 two-dimensional subspaces of $\RR^4$ satisfying phase retrieval.
\end{theorem}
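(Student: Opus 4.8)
The plan is to invoke Edidin's theorem (Theorem \ref{edidin}) to convert the assertion into a spanning condition, and then to exhibit an explicit family of six planes for which that condition can be checked directly. By Theorem \ref{edidin}, a family $\{W_i\}_{i=1}^6$ of $2$-dimensional subspaces of $\RR^4$, with projections $\{P_i\}_{i=1}^6$, does phase retrieval if and only if $\spn\{P_ix\}_{i=1}^6=\RR^4$ for every $0\neq x\in\RR^4$. Since each $P_i$ is self-adjoint, $\ip{P_ix}{v}=\ip{x}{P_iv}$, so this spanning condition fails for some $x\neq 0$ exactly when there is a nonzero pair $(x,v)$ with $\ip{P_ix}{v}=0$ for all $i\in[6]$, where $v$ is a normal to the hyperplane containing $\{P_ix\}$. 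Thus the whole problem reduces to producing six planes for which this homogeneous bilinear system has no nonzero real solution.

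Writing $W_i=\spn\{a_i,b_i\}$ with $\{a_i,b_i\}$ orthonormal, each equation becomes
\[
\ip{x}{a_i}\ip{a_i}{v}+\ip{x}{b_i}\ip{b_i}{v}=0,
\]
which is linear separately in $x$ and in $v$. Hence we are asking whether six bidegree-$(1,1)$ forms on $\mathbb P(\RR^4)\times\mathbb P(\RR^4)$ have a common real zero. Over $\CC$ a short intersection computation on $\mathbb P^3\times\mathbb P^3$ shows that the generic such system has exactly $\binom{6}{3}=20$ solutions (the coefficient of the top class in $(h_1+h_2)^6$). These occur as any real solutions together with complex-conjugate pairs, so the even count leaves open the possibility that none is real; the real obstruction is therefore not a dimension phenomenon but the purely real-algebraic requirement that all twenty solutions be non-real. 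This is exactly the borderline behaviour one expects at the optimal count $2n-2=6$, and it is the reason a naive genericity argument does not suffice.

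For the construction I would begin from the six coordinate $2$-planes $\spn\{e_i,e_j\}$ and then perturb them to remove the degeneracies: the coordinate family fails precisely when $x$ lies in a coordinate hyperplane, since then no projection acquires a component in the missing direction. I would therefore tilt each plane by a small generic rotation, or equivalently replace the standard basis pairs by pairs adapted to a configuration carrying enough symmetry to keep the bookkeeping finite. To verify the chosen family I would form the $6\times4$ matrix $M(x)$ whose rows are $P_1x,\dots,P_6x$; phase retrieval holds if and only if $\rk M(x)=4$ for every $x\neq0$, that is, the $4\times4$ minors of $M(x)$ (homogeneous quartics in $x$) have no common nonzero real root. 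I would discharge this by casework on which coordinates of $x$ vanish, exhibiting in each stratum an explicit nonvanishing minor, or by producing a positivity certificate for the sum of squares of the maximal minors.

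The main obstacle is precisely this last verification: because the complex solution count is positive, there is no room to be wasteful, and one must check that the specific tilts chosen push all twenty complex solutions off the real locus. I expect the cleanest way to control this is to retain a symmetry group acting on the six planes, so that the $4\times4$ minor conditions collapse to a small number of inequalities that can be certified by hand, rather than attempting the generic (and here inconclusive) parameter count.
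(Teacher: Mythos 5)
Your reduction via Theorem \ref{edidin} is correct, and your structural analysis of the problem is accurate: failure of phase retrieval is equivalent to a nonzero real solution $(x,v)$ of the six symmetric bilinear equations $\ip{P_ix}{v}=0$, the generic complex count on $\mathbb{P}^3\times\mathbb{P}^3$ is indeed $\binom{6}{3}=20$, and you are right that this positive, even count defeats any naive genericity or parity argument --- the theorem amounts to certifying that for one specific configuration all twenty complex solutions are non-real. Note that the paper under review contains no proof of this statement at all; it is quoted from \cite{X}, where the argument does essentially what you outline: an explicit family of subspaces is produced and the absence of nonzero real solutions is verified by direct algebraic computation. So your plan is aimed in the right direction.

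But your text stops exactly where the proof must begin, and this is a genuine gap rather than a stylistic one. You never write down six planes, and every decisive step is conditional ("I would tilt", "I would discharge", "I expect"). The perturbation heuristic is not self-certifying: at the coordinate configuration the real solution set is positive-dimensional (every $x$ in a coordinate hyperplane gives a failure, as you correctly note), and under a small perturbation such a degenerate real locus may either dissolve into complex-conjugate pairs or shed honestly real solutions nearby --- nothing in your argument controls which occurs, so "a small generic rotation" establishes nothing without the subsequent verification. That verification (an explicit nonvanishing $4\times 4$ minor on each vanishing stratum, or a positivity certificate for the sum of squares of the maximal minors of $M(x)$) is named but never produced, and producing it is the entire content of the theorem. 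As written, this is a correct problem analysis together with a research plan; to make it a proof you must fix an explicit sextuple of planes and carry the minor casework or the positivity certificate to completion.
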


In \cite{A} it was shown that there are 6 hyperplanes in $\RR^4$ doing phase retrieval. 

The following result also appears in
\cite{A}. Since this result is central to this paper, we include its short proof.

\begin{theorem}\label{T}
It takes at least $2n-2$ hyperplanes in $\RR^n$ to do phase retrieval.
\end{theorem}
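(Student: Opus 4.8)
The plan is to prove the contrapositive: given any collection of $m \le 2n-3$ hyperplanes $\{W_i\}_{i=1}^m$ with unit normal vectors $\{\phi_i\}_{i=1}^m$, I will exhibit a nonzero $x$ whose images $\{P_i x\}_{i=1}^m$ fail to span $\RR^n$, so that phase retrieval fails by Theorem \ref{edidin}. The starting point is the explicit formula $P_i x = x - \langle x,\phi_i\rangle\phi_i$, from which $\langle y,P_i x\rangle = \langle x,y\rangle - \langle x,\phi_i\rangle\langle y,\phi_i\rangle$. Hence $\{P_i x\}_{i=1}^m$ fails to span $\RR^n$ precisely when some nonzero $y$ satisfies $\langle x,\phi_i\rangle\langle y,\phi_i\rangle = \langle x,y\rangle$ for every $i$, and it suffices to produce one such pair $x,y$ with both nonzero.

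First I would search for the pair among orthogonal vectors: if $x \perp y$ the condition collapses to $\langle x,\phi_i\rangle\langle y,\phi_i\rangle = 0$ for all $i$, i.e. each normal $\phi_i$ is orthogonal to $x$ or to $y$. I therefore partition $[m] = I \sqcup J$ with $|I| \le n-1$ and $|J| \le n-2$, which is possible exactly because $m \le (n-1)+(n-2) = 2n-3$. Setting $A = (\spn\{\phi_i\}_{i\in I})^\perp$ and $B = (\spn\{\phi_i\}_{i\in J})^\perp$, the cardinality bounds give $\dim A \ge n-|I| \ge 1$ and, crucially, $\dim B \ge n-|J| \ge 2$.

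Next I pick any nonzero $x \in A$, so that $\langle x,\phi_i\rangle = 0$ for all $i \in I$. Because $\dim B \ge 2$, the subspace $B \cap x^\perp$ has dimension at least $1$, so I may choose a nonzero $y \in B$ with $y \perp x$; then $\langle y,\phi_i\rangle = 0$ for all $i \in J$. For every index $i$ we now have $\langle x,\phi_i\rangle\langle y,\phi_i\rangle = 0 = \langle x,y\rangle$, which is exactly the required identity, so $\{P_i x\}_{i=1}^m \subseteq y^\perp \ne \RR^n$ and phase retrieval fails.

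The one point requiring care — and the reason the bound is $2n-2$ rather than the trivial $2n-1$ for general vectors — is the orthogonality constraint $x \perp y$. A symmetric split into two blocks of size $n-1$, which is all one can guarantee once $m = 2n-2$, leaves both $A$ and $B$ one-dimensional, and two fixed lines need not contain an orthogonal pair. The asymmetric split forcing $\dim B \ge 2$ is precisely what absorbs the single orthogonality condition, and this extra room is exactly what disappears at $m = 2n-2$, consistent with the known $6$-hyperplane example in $\RR^4$.
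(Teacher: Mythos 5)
Your proof is correct and takes essentially the same route as the paper's: both rest on Theorem \ref{edidin} together with the asymmetric split of the $m\le 2n-3$ indices into blocks of size at most $n-1$ and $n-2$, with $x$ chosen in the intersection of the first block of hyperplanes (your $x\in A$ is exactly the paper's $x\in\bigcap_{i=1}^{n-1}W_i$). The only difference is cosmetic: where the paper simply counts that the images collapse to at most $n-1$ distinct vectors $\{x\}\cup\{P_ix\}_{i=n}^{m}$, which are too few to span $\RR^n$, you instead exhibit an explicit witness $y$ orthogonal to every $P_ix$ -- a valid but slightly longer finish, and the reason you need the extra room $\dim B\ge 2$ that the paper's cardinality count never requires.
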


\begin{proof}
Let $\{W_i\}_{i=1}^m$ be a family of hyperplanes in $\RR^n$ doing phase retrieval with respective projections $\{P_i\}_{i=1}^m$. \\
By way of contradiction, assume $m \leq 2n-3$.
Choose  a vector $$0 \not= x \in \bigcap_{i=1}^{n-1}W_i$$ so $P_ix=x$ for $i \in [n-1]$. Therefore we can write, $\{P_i x\}_{i=1}^{m}= \{x\} \cup\{P_ix\}_{i=n}^{m}$. Notice that $2n-3-(n-1)+1= n-1$ which shows $\{P_i x\}_{i=1}^m$ has at most $n-1$ non-zero vectors and cannot span the space. This is contradicting Theorem \ref{edidin}.

\end{proof}

The main open question in this area has been about finding the least number of subspaces satisfying phase retrieval. We will
show in this paper that it takes at least $2n-2$ subspaces to satisfy phase retrieval in $\RR^n$.  \\

Another open problem in this area is motivated by this theorem of 
Edidin \cite{E}:
\vspace{0.1 in}

\begin{theorem}
If $n=2^k-1$, for some $k$, then it takes $2n-1$ subspaces of $\RR^n$ to do phase retrieval.
\end{theorem}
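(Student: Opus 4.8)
The plan is to convert the counting problem into the nonexistence of a nonsingular bilinear map, and then to invoke a $K$-theoretic obstruction, the point being that the classical mod-$2$ obstruction is exactly too weak to see the dimensions $n=2^k-1$.

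First I would apply Theorem \ref{edidin}: a family $\{P_i\}_{i=1}^m$ does phase retrieval precisely when $\spn\{P_ix\}_{i=1}^m=\RR^n$ for every $0\ne x$. Dualizing and using $P_i=P_i^T$, this spanning condition is equivalent to nonsingularity of the \emph{symmetric} bilinear map
\[ B:\RR^n\times\RR^n\to\RR^m,\qquad B(v,x)=\big(\langle P_iv,x\rangle\big)_{i=1}^m, \]
in the sense that $B(v,x)=0$ forces $v=0$ or $x=0$. Indeed $B(v,x)=0$ says $v\perp\spn\{P_ix\}$, which for $x\ne0$ collapses to $v=0$ exactly when the $P_ix$ span; symmetry $B(v,x)=B(x,v)$ is immediate from $P_i=P_i^T$. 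Thus phase retrieval with $m$ projections manufactures a nonsingular symmetric bilinear map $\RR^n\times\RR^n\to\RR^m$.

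Next I would argue by contradiction, assuming phase retrieval with $m\le 2n-2$. Since appending a zero coordinate to $B$ preserves nonsingularity, it suffices to rule out $m=2n-2$. Passing to projective spaces, nonsingularity makes
\[ g:\RR P^{n-1}\times\RR P^{n-1}\to\RR P^{m-1},\qquad g([x],[v])=[B(x,v)] \]
well defined and continuous. Each axial restriction is induced by an injective linear map, so on $\ZZ/2$-cohomology $g^*(c)=a+b$, where $c,a,b$ generate the respective $H^1$. From $c^m=0$ I obtain the Stiefel--Hopf relation $(a+b)^m=0$ in $\ZZ/2[a,b]/(a^n,b^n)$, whose unique surviving top term forces $\binom{2n-2}{n-1}\equiv0\pmod2$. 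This rules out many dimensions, but for $n=2^k-1$ the quantity $\binom{2n-2}{n-1}=\binom{2(2^k-2)}{2^k-2}$ is a central binomial coefficient, hence even, so the mod-$2$ test is passed and yields no contradiction.

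The hard part is therefore exactly $n=2^k-1$, where a finer invariant is unavoidable. Here I would exploit the symmetry of $B$: the map $g$ is invariant under interchanging the factors, so it descends to the symmetric square, whose cohomology carries the Steenrod squares and, more to the point, supports the Adams operations in (real or complex) $K$-theory. The plan is to run the Hopf-invariant argument in the style of Adams' vector-fields theorem: the condition $n=2^k-1$ is precisely the $2$-adic arithmetic situation in which the primary mod-$2$ obstruction lifts to a nonzero higher $K$-theoretic $e$-invariant of the induced map, obstructing any nonsingular symmetric bilinear map into $\RR^{2n-2}$. Establishing this nonvanishing is the crux and the only genuinely deep step; everything preceding it is formal. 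Once it is in hand, no such bilinear map exists, so $m\ge 2n-1$, and since $2n-1$ vectors already do phase retrieval the bound is sharp.
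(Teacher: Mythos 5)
Your first two steps are sound and standard: the reduction of phase retrieval, via Theorem \ref{edidin}, to a nonsingular \emph{symmetric} bilinear map $B:\RR^n\times\RR^n\to\RR^m$ is correct (if $B(v,x)=0$ and $x\neq 0$, then $v\perp\spn\{P_ix\}_{i=1}^m=\RR^n$), as is the axial-map computation giving $(a+b)^{2n-2}=\binom{2n-2}{n-1}a^{n-1}b^{n-1}$ in $\ZZ/2[a,b]/(a^n,b^n)$. But your framing of that step is off: since $\binom{2(n-1)}{n-1}$ is a central binomial coefficient, it is even for \emph{every} $n\ge 2$, so at $m=2n-2$ the Stiefel--Hopf test rules out nothing for any $n$ whatsoever --- it is not that the test ``rules out many dimensions'' and merely misses $n=2^k-1$; it is vacuous here, which is exactly why the theorem has content. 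Everything in your proposal up to the final paragraph is, as you say yourself, formal.

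The genuine gap is that final paragraph, which is where the entire theorem lives: you never define the invariant you intend to use (the $e$-invariant of which map, in which $K$-theory, of which space?), never explain how the symmetric-square structure produces it, and never compute anything showing it is nonzero precisely when $n=2^k-1$; the sentence ``establishing this nonvanishing is the crux and the only genuinely deep step'' is an admission that the proof is missing, not a proof. Note also that the paper under review does not prove this statement at all --- it quotes it from Edidin \cite{E} --- so the relevant comparison is with Edidin's argument, which shares your opening move but then becomes concrete in a different way: the spanning criterion of Theorem \ref{edidin} is encoded as a vector-bundle condition over $\RR P^{n-1}$ (a surjection of the trivial rank-$m$ bundle onto $\gamma\otimes\varepsilon^n$, equivalently a sections problem for multiples of the tautological line bundle $\gamma$, a generalized vector field problem), and the bound is then extracted from the known $K$-theory of real projective space going back to Adams, where the hypothesis $n=2^k-1$ enters through explicit $2$-adic arithmetic in $\widetilde{KO}(\RR P^{n-1})\cong\ZZ/2^{\phi(n-1)}$. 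So your instinct that Adams-style $K$-theory is the right toolbox is correct, but your specific route through an $e$-invariant of a map on the symmetric square is unverified and not known to work as stated; to complete the argument you would need either to carry out that computation in full or to replace it with the bundle-theoretic reduction and cite the relevant projective-space computations.
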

\vspace{0.1 in}

So now the main open problem in this area is:
\vspace{0.1 in}

\begin{problem}
Can we do phase retrieval in $\RR^n$ with $2n-2$ subspaces whenever $n\not= 2^k-1$ for any $k$.
\end{problem}

\vspace{0.1 in}
The complex case is significantly more complicated. It is known \cite{BCMN} that phase retrieval can be done in $\CC^n$ with
$4n-4$ vectors. It was believed this result was best possible. But Vinzant \cite{V} showed that phase retrieval can be
done in $\CC^4$ with 11 vectors (in other words $4n-5$ vectors). So the major open problem here is:

\vspace{0.1 in}
\begin{problem}
What is the minimum number of vectors required to do phase retrieval in $\CC^n$?
\end{problem}
\vspace{0.1 in}

Phase retrieval by projections in $\CC^n$ is still unknown.

\vspace{0.1 in}

\begin{problem}
What is the minimum number of projections required to do phase retrieval in $\CC^n$
\end{problem}

\vspace{0.1 in}
Notice that Theorem \ref{edidin}
(Edidin's theorem) fails in $\CC^n$. A complex version of this theorem appears in \cite{X}, Theorem 2.1 (3). 
A completely different approach to a complex version of Edidin's theorem was proved in \cite{CC}. In this paper they associate vectors
in $\CC^n$  with rank 2 projections in $\RR^{2n}$.

\vspace{0.1 in}
\section{Main Result}
\vspace{0.1 in}

The following is immediate by Theorem \ref{edidin}.

\begin{lemma}
If a family of projections 
$\{P_i\}_{i=1}^m$ does phase retrieval in $\RR^n$, then for every $0\not= x\in \RR^n$, either $\spn \{P_ix\}_{i=2}^m=\RR^n$
or $span \{P_ix\}_{i=2}^m$ spans a hyperplane. \\
Moreover, if this is a hyperplane, $P_1x\notin \spn\{P_ix\}_{i=2}^m$.
\end{lemma}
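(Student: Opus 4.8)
The plan is to leverage Edidin's theorem (Theorem~\ref{edidin}) directly. Since $\{P_i\}_{i=1}^m$ does phase retrieval, Theorem~\ref{edidin} tells us that for every $0\neq x\in\RR^n$ we have $\spn\{P_ix\}_{i=1}^m=\RR^n$. The goal is to understand what happens when we discard the single vector $P_1x$ from this spanning set. Removing one vector from a spanning set of $\RR^n$ can drop the dimension of the span by at most one, so $\spn\{P_ix\}_{i=2}^m$ must have dimension either $n$ or $n-1$; that is, it either spans all of $\RR^n$ or spans a hyperplane. This dichotomy is the first assertion.

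For the "moreover" clause, I would argue as follows. Suppose the second case holds, so $\spn\{P_ix\}_{i=2}^m$ is a hyperplane $H$ of dimension $n-1$. I must show $P_1x\notin H$. The key observation is that if $P_1x$ were in $H$, then $H$ would already contain every $P_ix$ for $i\in[m]$, forcing $\spn\{P_ix\}_{i=1}^m\subseteq H\subsetneq\RR^n$. This contradicts the spanning conclusion of Theorem~\ref{edidin}. Hence $P_1x$ must lie outside the hyperplane $H$, which is exactly the claim.

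I expect the argument to be short and essentially elementary, with the main content being the linear-algebra fact that deleting one vector drops the span dimension by at most one. There is no serious obstacle here; the only point requiring care is verifying that the two cases (full span versus hyperplane) are genuinely exhaustive, which follows immediately from the dimension bound combined with the fact that the full family spans $\RR^n$. The second part is then a clean application of the principle that a proper subspace cannot be enlarged to all of $\RR^n$ by appending a vector it already contains. In short, the entire lemma is a direct unpacking of Theorem~\ref{edidin} together with the rank-one perturbation bound on spans, so I would write it as a two-sentence deduction for each of the two assertions.
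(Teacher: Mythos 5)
Your proof is correct and coincides with what the paper intends: the paper offers no written proof at all, declaring the lemma ``immediate by Theorem~\ref{edidin},'' and your two observations --- that deleting the single vector $P_1x$ from the spanning family $\{P_ix\}_{i=1}^m$ drops the dimension of the span by at most one, and that $P_1x$ lying in the hyperplane $H=\spn\{P_ix\}_{i=2}^m$ would force $\spn\{P_ix\}_{i=1}^m\subseteq H\subsetneq \RR^n$, contradicting Theorem~\ref{edidin} --- are exactly the unpacking of that immediacy. No gaps; your write-up is complete as is.
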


\vspace{0.1 in}

The next result is fundamental to our proof.

\vspace{0.1 in}
\begin{theorem}\label{thm}
Let a family of projections $\{P_i\}_{i=1}^m$ do phase retrieval on $\RR^n$ and let
\[ F=\{x:\|x\|=1\mbox{ and }H_x=\spn\{P_ix\}_{i=2}^m \not= \RR^n\}.\]
If $x_1,x_2\in F$ and $H_{x_1}=H_{x_2}$, then $x_1=\pm x_2$.
\end{theorem}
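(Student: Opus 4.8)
The plan is to pass to the unit normal of the common hyperplane and exploit the self-adjointness of the orthogonal projections, so that the hypothesis $H_{x_1}=H_{x_2}$ turns into an orthogonality condition to which Edidin's theorem (Theorem \ref{edidin}) applies directly.

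First I set $H=H_{x_1}=H_{x_2}$. By the preceding lemma, for a point of $F$ the space $H_x$ is an honest hyperplane, so $H$ has codimension one; let $v$ be a unit vector spanning $H^\perp$. By the definition of $H_{x_j}$ we have $P_ix_j\in H=v^\perp$ for every $i\in\{2,\dots,m\}$ and $j\in\{1,2\}$, hence $\langle P_ix_j,v\rangle=0$.

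The key move is to shift the projection off of $x_j$: since each $P_i$ is an orthogonal projection, it is self-adjoint, so $0=\langle P_ix_j,v\rangle=\langle x_j,P_iv\rangle$ for all $i\ge 2$. Therefore both $x_1$ and $x_2$ lie in $V^\perp$, where $V:=\spn\{P_iv\}_{i=2}^m$. Now I apply Edidin's theorem to the nonzero vector $v$: because $\{P_i\}_{i=1}^m$ does phase retrieval, $\spn\{P_iv\}_{i=1}^m=\RR^n$. Since this full span is obtained from $V$ by adjoining the single vector $P_1v$, it follows that $\dim V\ge n-1$.

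Finally I combine the two estimates. On the other hand $x_1\neq 0$ lies in $V^\perp$, so $V\neq\RR^n$ and $\dim V\le n-1$; together with the previous inequality this forces $\dim V=n-1$, i.e. $\dim V^\perp=1$. As $x_1$ and $x_2$ are both unit vectors in the one-dimensional space $V^\perp$, they must agree up to sign, giving $x_1=\pm x_2$. I do not expect a serious obstacle here; the one point that needs care is confirming that $H$ is genuinely a hyperplane, so that $v$ is well defined and the single vector $P_1v$ accounts for exactly the missing dimension in the span $\{P_iv\}_{i=1}^m$. This is precisely what the preceding lemma guarantees, and it is the step I would check most carefully.
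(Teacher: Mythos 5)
Your proof is correct and follows essentially the same route as the paper: take a unit normal $v$ (the paper's $z$) to the common hyperplane, use self-adjointness of the $P_i$ to get $x_1,x_2\perp\spn\{P_iv\}_{i=2}^m$, and conclude via Edidin's theorem that this span is a hyperplane whose one-dimensional complement contains both unit vectors. In fact your explicit dimension count ($\dim V\ge n-1$ from adjoining $P_1v$, $\dim V\le n-1$ from $x_1\in V^\perp$) supplies the justification the paper leaves implicit in its assertion that ``this family spans a hyperplane.''
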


\begin{proof}
Since $H_{x_1}=H_{x_2}$ is a hyperplane, there is a vector $\|z\|=1$ so that $z\perp P_ix_1$ and $z\perp P_ix_2$ for all
$i=2,3,\ldots,m$. Hence
\[ 0 = \langle z,P_ix_1\rangle =\langle P_iz,x_1\rangle=\langle z,P_ix_2\rangle=\langle P_iz,x_2\rangle, \mbox{ for all }
i=2,3,\ldots,m.\]
Since this family spans a hyperplane, $x_1,x_2$ are both orthogonal
to this hyperplane and so $x_1=\pm x_2$.
\end{proof}

\vspace{0.1 in}

The main result will be based on the following theorem:

\vspace{0.1 in} 

\begin{theorem}\label{TT}
Let a family of subspaces $\{W_i\}_{i=1}^m$ do phase retrieval in $\RR^n$. If $dim \ W_1^{\perp}\ge 2$, there is a $z\in W_1^{\perp}$ with $\|z\|=1$ so that
$\{W_1\cup z\} \cup \{W_2,\ldots,W_m\}$ does phase retrieval.
\end{theorem}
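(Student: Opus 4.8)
The plan is to reduce the claim, via Edidin's criterion (Theorem \ref{edidin}) and the Lemma, to a statement about avoiding a ``bad set'' of directions $z$. Since $z\in W_1^{\perp}$ and $\|z\|=1$, writing $Q_zx=\langle x,z\rangle z$ for the rank-one projection onto $\spn\{z\}$, the orthogonal projection onto $W_1':=W_1\oplus\spn\{z\}$ is $P_1'=P_1+Q_z$. By Theorem \ref{edidin} the enlarged family does phase retrieval iff $\{P_1'x\}\cup\{P_ix\}_{i=2}^m$ spans $\RR^n$ for every $0\neq x$. By the Lemma, for each unit $x$ the space $H_x=\spn\{P_ix\}_{i=2}^m$ is either all of $\RR^n$ (in which case the enlarged set automatically spans) or a hyperplane; so a failure can occur only at some $x\in F$. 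For such $x$ let $n_x$ be a unit normal to $H_x$; since the original family does phase retrieval, $P_1x\notin H_x$, i.e. $a(x):=\langle P_1x,n_x\rangle\neq 0$, and the enlarged family fails at $x$ exactly when $\langle P_1'x,n_x\rangle=0$. Using $P_1x\in W_1$, $z\in W_1^{\perp}$, and writing $P_1^{\perp}=I-P_1$ for the projection onto $W_1^{\perp}$, this failure condition becomes
\[\langle P_1^{\perp}x,z\rangle\,\langle P_1^{\perp}n_x,z\rangle=-a(x)\neq 0.\]

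Next I would analyze this one $x$ at a time. For a fixed $x\in F$ the bad unit vectors $z$ are exactly the intersection of the unit sphere $S\subset W_1^{\perp}$ with the quadric $\{z:\langle P_1^{\perp}x,z\rangle\langle P_1^{\perp}n_x,z\rangle=-a(x)\}$. Because $a(x)\neq 0$, this equation is never identically satisfied (it fails whenever $\langle P_1^{\perp}x,z\rangle=0$ or $\langle P_1^{\perp}n_x,z\rangle=0$), so the bad set is a proper algebraic subset of $S$. The hypothesis $\dim W_1^{\perp}\ge 2$ enters precisely here: it forces $\dim S\ge 1$, so for each individual $x$ the bad set omits a dense open set of directions $z$. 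Thus the whole difficulty is to pass from ``good for each fixed $x$'' to ``good simultaneously for all $x\in F$.''

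For the crux I would assemble whatever rigidity is available. First, $F$ is a closed, hence compact, subset of the sphere, and on $F$ the hyperplane $H_x$ has constant dimension $n-1$, so $x\mapsto n_x$ is continuous up to sign and $|a(x)|$ attains a positive minimum $\delta>0$; hence failure requires the perturbation $\langle P_1^{\perp}x,z\rangle\langle P_1^{\perp}n_x,z\rangle$ to have modulus at least $\delta$. Second, the incidence set $\{(x,z)\in F\times S:\langle P_1^{\perp}x,z\rangle\langle P_1^{\perp}n_x,z\rangle=-a(x)\}$ is semialgebraic, so the bad set $\mathcal{B}=\bigcup_{x\in F}\{z\in S:\langle P_1^{\perp}x,z\rangle\langle P_1^{\perp}n_x,z\rangle=-a(x)\}$ is a semialgebraic subset of $S$. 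Third, Theorem \ref{thm} supplies the key rigidity: $x\mapsto H_x$ is injective on $F$ up to sign, equivalently $x\perp\spn\{P_in_x\}_{i\ge 2}$ ties $x$ and $n_x$ together through the same projections. The goal would be to combine these to certify that $\mathcal{B}\neq S$ and then take any $z\in S\setminus\mathcal{B}$.

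I expect this last step to be the genuine obstacle — and, given the paper's own disclaimer, the location of the gap. A naive dimension or measure count does not suffice: each bad slice has dimension $\dim S-1$, but $F$ may itself be positive-dimensional (of dimension up to about $n-2$), so the union $\mathcal{B}$ has expected dimension $(n-2)+(\dim S-1)$, which typically exceeds $\dim S$; an uncountable union of measure-zero slices can sweep out all of $S$. Nor does the bound $|a(x)|\ge\delta$ rescue the argument in the way one would like, since $z$ is constrained to be a \emph{unit} vector and $Q_z$ then has operator norm $1$, so the perturbation cannot be made uniformly small. The real content is therefore to show that the direction $z$ cannot make $\langle P_1^{\perp}x,z\rangle\langle P_1^{\perp}n_x,z\rangle$ equal to $-a(x)$ \emph{simultaneously sweeping out the entire continuum} $F$, and it is exactly this global-over-$F$ control — rather than the per-$x$ genericity, which is routine — that must carry the proof.
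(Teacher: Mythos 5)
Your setup is sound and is essentially the paper's own: the reduction through Theorem \ref{edidin} and the Lemma to the exceptional set $F$, the choice of a unit normal $n_x$ to the hyperplane $H_x$, and the failure condition $\langle P_1^{\perp}x,z\rangle\langle P_1^{\perp}n_x,z\rangle=-a(x)\neq 0$ reproduce steps (1)--(7) of the paper's proof, where the paper's constant $c$ is exactly your $-a(x_j)$. But as a proof your proposal is incomplete, and you say so yourself: the rigidity you assemble (compactness of $F$, a uniform lower bound $|a(x)|\ge\delta$, semialgebraicity of the incidence set, the sign-injectivity from Theorem \ref{thm}) is never used to produce a single $z$ good simultaneously for every $x\in F$. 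Since the theorem \emph{is} the existence of such a $z$, what you have established is only the routine per-$x$ genericity, plus an accurate map of the difficulty.

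What makes your diagnosis worth recording is that the missing step is exactly where the paper's own argument breaks, as its title concedes. In step (4) the paper asserts that ``it follows from (3) that there are at most a finite number of $x$'s for which $\spn\{P_ix\}_{i=2}^m\neq\RR^n$.'' This is a non sequitur: Theorem \ref{thm} (the paper's (3)) gives only that $x\mapsto H_x$ is injective on $F$ up to sign, and since the hyperplanes of $\RR^n$ form an $(n-1)$-parameter continuum, injectivity into them puts no bound whatsoever on the cardinality of $F$; nothing in the argument prevents $F$ (a closed subset of the sphere cut out by the condition $\rk(P_2x,\ldots,P_mx)\le n-1$) from being infinite, even positive-dimensional. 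Everything after step (4) --- the enumeration $F=\{x_j\}_{j=1}^k$, the case analysis in steps (9)--(10) showing each bad set $A$ consists of finitely many lines or curves of measure zero in $W_1^{\perp}$, and the avoidance argument in (11) --- is valid only for finite $F$, and your per-$x$ quadric analysis is an equivalent (in fact cleaner) form of that finite-case computation. Your closing observation pinpoints why no measure count can patch this: over a positive-dimensional $F$ the union of the codimension-one bad slices of the unit sphere of $W_1^{\perp}$ can a priori have full measure or cover the sphere. So your attempt and the paper stall at the same point; neither contains a proof of Theorem \ref{TT}, and the genuine content still to be supplied is global control of the bad set over all of $F$ at once, not the per-$x$ genericity.
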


\begin{proof}
We will do this in steps.

\vskip10pt
(1) Choose orthonormal bases $\{u_{ij}\}_{j=1}^{I_i}$ for $W_i$  when $i \in [m]$
so that these vectors satisfy phase retrieval.  For $z\in W_1^{\perp}$ let $W_z=\spn\{W_1,z\}$
and let $P_z$ be the projection onto $W_z$.\\
We need to show there exists a $z\in W_1^{\perp}$ so that $\spn \{P_z,P_2,\ldots,P_n\}$ does phase retrieval in $\RR^n$.
\vskip10pt
(2) Choose $x\in \RR^n$ with $\|x\|=1$.  If $\spn \{P_ix\}_{i=2}^m=\RR^n$, this shows that if we add a vector $z$ which is in $W_1^{\perp}$, the subspaces still satisfy phase retrieval and we are done for this $x$.\\

Now we need to consider the cases when $\spn \{P_ix\}_{i=2}^m \neq \RR^n$,
\vskip10pt
(3)  By Theorem \ref{thm}, if \ $ \spn\{P_ix\}_{i=2}^m$ = $\spn \{P_iy\}_{i=2}^m\not= \RR^n$ and $\|x\|=\|y\|$, then $x=\pm y$.
\vskip10pt
(4) Note
\[ \spn\{P_ix\}_{i=2}^m=\spn\{u_{kj}:\langle P_ix,u_{kj}\rangle \not= 0\mbox{ for some }i=1,2,\ldots,m\}.\]

\vspace{0.1 in}

It follows from (3) that there are at most a finite number of $x's$ for which $\spn \{P_ix\}_{i=2}^m\not= \RR^n$.Call these $\{x_j\}_{j=1}^k$,
and let $V_{x_j}=\spn\{P_ix_j\}_{i=2}^m$ and choose $\|y_j\|=1$ with $y_j\perp V_{x_j}$.
\vskip10pt
(5)Now we need to find a $z\in W_1^{\perp}$ with $\|z\|=1$ and $P_zx_j\notin V_{x_j}$ for all $j \in [k]$. \\
Since there are a finite number of $V_{x_j}$, it suffices to show that for all $j\in [k]$ if  
\begin{equation}\label{E1}
A=\{z\in W_1^{\perp}:\|z\|=1,\  P_zx_j\in V_{x_j}=y_j^{\perp},\}
\end{equation}
the family of lines in the
unit ball of A going through the origin
is a set of measure zero relative to Lebesgue measure on $W_1^{\perp}$.
\vskip12pt
(6)  We have $A\cap y_j^{\perp}= A\cap x_j^{\perp} = \emptyset$.
\begin{itemize}
\item If $z\in A\cap x_j^{\perp}$ then:
\begin{align*}
P_zx_j &= P_1x_j+\langle x_j, z\rangle z \\
&=P_1x_j\in y_j^{\perp}.
\end{align*}

\vspace{0.05in}

It follows that  $\spn\{P_ix_j\}_{i=1}^m=y_j^{\perp}\not= \RR^n$. So by Theorem \ref{edidin}, $\{P_i\}_{i=1}^m$ fails
phase retrieval - a contradiction. 
\vskip10pt
\item If $z\in A\cap y_j^{\perp}$ then
\[ P_zx_j = P_1x_j+\langle x_j,z\rangle z\in y_j^{\perp}.\]

\vspace{0.05in}

Since $y_j^{\perp}$ is a subspace and $z\in y_j^{\perp}$ it follows again that $P_1x_j\in y_j \dg{^{\perp}}$ and so $\{P_i\}_{i=1}^m$
fails phase retrieval - again a contradiction.
\end{itemize}

\vskip10pt
(7) {\bf Claim:}  There is a constant $0< c\le 1$ so that for every $z\in A$ we have
\[ \langle x_j,z\rangle\langle y_j,z\rangle =c.\]
{\bf Proof of the claim:} Let $z_1,z_2\in A$. Then if $P_{z_1}x_j,P_{z_2}x_j\in y_j^{\perp}$, since $y_j^{\perp}$ is a subspace,
\[
P_{z_1}x_j-P_{z_2}x_j = (P_1x_j+\langle x_j,z_1\rangle z_1)-
(P_1x_j+\langle x_j,z_2\rangle z_2)\in y_j^{\perp}.\]
Hence
\[  \langle x_j,z_1\rangle \langle y_j,z_1\rangle -
 \langle x_j,z_2\rangle \langle y_j,z_2\rangle=0.\]
 
 \vspace{0.05in}
 
Notice that $c\not= 0$, since if $c=0$, then for every $z\in A$ either $z\perp x_j$ or $z\perp y_j$
which is not possible by (6).

\vspace{0.15in}

(8) After a rotation, we may assume $X=\spn \{x_j,y_j\} =\RR^2\subset \RR^n$ with canonical basis $\{e_1,e_2\}$ and
$x_j=e_1$.  \\
Let $y_j=(e,f)\in \RR^2$ and let $P$ be the projection onto $X$.
We want to find the points $v\in X$ so that $\langle x_j,v\rangle \langle y_j,v\rangle =c$.

\vspace{0.1in}

The points on the line $\ell_1$ orthogonal to $x_j=e_1$ and passing through $(x,0)$ represents all the points in $\RR^2$
whose inner product with $x_j$ is $x$. Let $m$ be the slope of the line orthogonal to $y_j$. The points on the line
$\ell_2$ orthogonal to $y_j$ and passing through $\frac{c}{x}(e,f)$ are the points in $\RR^2$ whose inner product with $y_j$
is $\frac{c}{x}$ since $e^2+f^2=1$. This line has equation:
\[ y-\frac{c}{x}f=m(x-\frac{c}{x}e).\]
That is:
\[ y=mx+\frac{c}{x}(f-em).\]
So if $v\in \ell_1\cap \ell_2$ for some $c\le x\le 1$ then
\[ v=(x,mx+\frac{c}{x}(f-em)).\]

\vspace{0.05in}
Therefore, 
the points $v\in X$ so that $\langle x_j,v\rangle \langle y_j,v\rangle =c$ are:\[ \{(x,mx +\frac{c}{x}(f-em)):c\le x \le 1\} \hspace{0.2in} (*)\]
\vskip10pt
(9)  We will answer the case where $dim\ W_1^{\perp}=2$.
We will examine 3 cases.
\vskip10pt

\noindent {\bf Case 1:} P maps $W_1^{\perp}$ onto $X$.
\vskip10pt
Since $P$ is onto, there is a $v\in W_1^{\perp}$ with $\|v\|=1$ and $Pv=re_1$.  Let $v^{\perp}$ be the unit vector in $W_1^{\perp}$ orthogonal
to $v$ and $y_j=(e,f)\in \RR^2$. Let $Pv^{\perp}=(g,h)$.
If $z\in A$, $z= av^{\perp}+bv$ and $a^2+b^2=1$.
By (8) and the argument above, $Pz=(x,mx+\frac{c}{x}(f-em))$. Also,
\begin{align*}
Pz&= aPv^{\perp}+bPv\\
&=(ag,ah)+(br,0)\\
&=(ag+br,ah).
\end{align*}

\vspace{0.05in}

So in order for $\langle x,Pz\rangle \langle y,Pz\rangle =c$ we must have for some $x$: 

\vspace{0.05in}

\[ ag+\sqrt{1-a^2}r=x\mbox{ and }ah=mx+\frac{c}{x}(f-em).\]

\vspace{0.1in}

Substituting the first equation into the second equation for $x$, since all constants except $a$ are fixed,
we have a quadratic equation in $a$.
So A is of cardinality at most two and the lines through A are a set of measure zero in $W_1^{\perp}$.

\vspace{0.15in}

\noindent {\bf Case 2:} $PW_1^{\perp}= \emptyset$.
\vskip10pt
In this case, if $z\in A$ then $Pz=0$. So $\langle z,x_j\rangle = \langle Pz,x_j\rangle =0$. By (6), this is a contradiction unless
$A=\emptyset$.

\vspace{0.15in}

\noindent {\bf Case 3:} $PW_1^{\perp}$ is dimension one in $X$.

\vspace{0.15in}

In this case,
$x_j^{\perp}\cap W_1^{\perp}=y_j^{\perp}\cap W_1^{\perp}$. So $x_j^{\perp}\cap W_1^{\perp}$ is a 
hyperplane in $W_1^{\perp}$.  
Let $y_j=(e,f)\in \RR^2$ and choose $\|v\|=1$ in $W_1^{\perp}$ with $Pv^{\perp}=0$. Let $Pv=(g,h)$.
\\

Then, if $z\in A$ with $z=av+bv^{\perp}$
 by (8) and (*),

\[Pz=(ag,ah)=(x,mx+\frac{c}{x}(f-em))\] 
Hence, \[ 
ag=x\mbox{ and }ah=mx+\frac{c}{x}(f-em).\]
Substituting the first equation in the second equation for $x$, all constants except $a$ are fixed so this is a quadratic equation for $a$
and hence has at most two solutions. That is, $A$ consists of at most two lines and so is a set of measure zero in $W_1^{\perp}$.
 
\vskip10pt
(10) We check the case $dim\ W_1^{\perp}\ge 3$.
\vskip10pt
Since $x_j^{\perp}\cap y_j^{\perp}$ is a subset of $\RR^n$ of codimension 1 or 2, we will examine these two cases.
\vskip10pt
\noindent {\bf Case 1:} $(x_j^{\perp}\cap y_j^{\perp})\cap W_1^{\perp}$ is a set of codimension 2 in $W_1^{\perp}$.
\vskip10pt
Choose a two dimensional subspace $V$ of $W_1^{\perp}$ so that $W_1^{\perp}=V\oplus (x_j^{\perp}\cap y_j^{\perp})$.

\vspace{0.05in}

{\bf Claim}: $PV=X$.

\vspace{0.1in}

{\bf Proof of claim: }If not, then there is a $v\in V$ with $Pv=0$. That is, 
\[ \langle v,x_j\rangle =\langle Pv,x_j\rangle =0= \langle v,y_j\rangle.\]
{\hspace{0.2in}}But $v\perp (x_j^{\perp}\cap y_j^{\perp}$), a contradiction.

\vspace{0.15in}

Choose $v\in V$, $\|v\|=1$ and $Pv=re_1$. Let $v^{\perp}$ be in V and orthogonal to $v$ and
$Pv^{\perp}=(g,h)$.
 It follows that every $z\in W_1^{\perp}$
is of the form $$z=bv+av^{\perp}+u$$ when $a^2+b^2+\|u\|^2=1$, and $P(u)=0$. 
By (8) and (*), $$Pz=(x,mx+\frac{c}{x}(f-em))$$ But  
\begin{align*}
Pz&= bPv+aPv^{\perp}\\
&=(br,0)+(ag,ah)\\
&=(ag+br,ah).
\end{align*}
So we must have for some $x$:
\[ ag+br=x\mbox{ and }ah=mx+\frac{c}{x}(f-em).\]
Now, from the second equation
\[ ah=m(ag+bj)+\frac{c}{ag+br}(f-em),\]
All the constants here are fixed except a,b. This equation, represents a quadratic equation in a,b. Hence,

\[\{z=av^{\perp}+bv:\langle x_j,Pz\rangle \langle y_j,Pz\rangle =c\}\] is the union of two continuous
quadratic graphs $G_1,G_2$ in $V$. 
For $z\in A$, $z=av+bv^{\perp}+u$, $\|z\|=1$.
 So
\[ A=\{x+u:x\in G_1,\  \|u\|^2=1-\|x\|^2\} \]
or
\[A=\{x+u:x\in G_2,\  \|u\|^2=1-\|x\|^2\}.\]

\vspace{0.2in}

In each case, since the unit ball of $(x_j^{\perp}\cap y_j^{\perp})\cap W_1^{\perp}$ is a set of measure zero in $W_1^{\perp}$ and
A is a curve on the sphere of $W_1^{\perp}$  and since $dim\ W_1^{\perp}\ge 3$, the
lines through A are a set of measure zero in $W_1^{\perp}$.

\vspace{0.1in}

\noindent {\bf Case 2:} $(x_j^{\perp}\cap y_j^{\perp})\cap W_1^{\perp}$ is a set of codimension 1 in $W_1^{\perp}$.
\vskip10pt
Choose $v\in W_1^{\perp}$ with $v\perp x_j$. Then every $z\in W_1^{\perp}$, $\|z\|=1$, is of the form
$z=av+u$ and $u\in x_j^{\perp}$. Let $Pv=(g,h)$. Now, $Pz=Pav$ so 
$$Pz=aPv=(ag,ah).$$ Hence, by (8) and (*), 
\[ ag = x\mbox{ and }ah=mx+\frac{c}{x}(f-em).\]
This yields
\[ ah=mag+\frac{c}{ag}(f-em).\]

 \vspace{0.1in}
 
This quadratic equation in $a$ has at most two solutions $a_1,a_2$. So $A$ is of the form $$\{a_iv+u:\|u\|^2=1-a_i^2\}.$$
 
 \vspace{0.1in}

But these sets are two translates of a circle in $(x_j^{\perp}\cap y_j^{\perp})\cap W_1^{\perp}$ and the lines through
these sets form a set of measure zero in $W_1^{\perp}$.

\vspace{0.15in}

(11) Now, for any $x_j$ the set of all $z\in W_1^{\perp}$ with $P_zx_j\in y_1^{\perp}$ is a set of measure zero in $W_1^{\perp}$.
So choose $z\in W_1^{\perp}$ which is in none of these $k$ subsets of $W_1^{\perp}$ of measure zero. Now, for any $0\not= x\in \RR^n$,
if $x\not= x_j$ for any $j \in [k]$, then $\{P_ix\}_{i=2}^m$ spans $\RR^n$. And if $x=x_j$ for some $j\in [k]$ then
$P_zx_j\notin y_j^{\perp}$ and so $\{P_zx_j,P_ix_j\}_{i=2}^m$ spans $\RR^n$. That is, $\{P_z,P_i\}_{i=2}^m$ does phase
retrieval in $\RR^n$.

\vspace{0.15in}

This completes the proof of the theorem.
\end{proof}

\vspace{0.1 in}

As a consequence, applying the theorem over and over, we have
\vspace{0.1 in}

\begin{corollary} Assume a family of subspaces $\{W_i\}_{i=1}^m$ with dim $W_i=k_i$, $1\le k_i\le n-1$, for all $i=1,2,\ldots,$ does phase retrieval in $\RR^n$.
The following hold:
\begin{enumerate}
\item For any $1\le k_i\le r_i \le n-1$, there are subspaces $W_i'$ of $\RR^n$ so that $W_i\subset W_i'$,
dim $W_i'=r_i$ and $\{W_i'\}_{i=1}^m$
does phase retrieval.
\item 
 There are hyperplanes $\{W_i'\}_{i=1}^m$ with $W_i\subset W_i'$
and $\{W_i'\}_{i=1}^m$ does phase retrieval. So by Theorem \ref{T}, $m\ge 2n-2$.
\end{enumerate}
\end{corollary}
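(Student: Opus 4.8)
The plan is to derive both statements by iterating Theorem \ref{TT}, which is precisely engineered to add exactly one dimension to a single subspace while preserving phase retrieval. First I would observe that the phase retrieval property of a family $\{W_i\}_{i=1}^m$ is unchanged under permuting the indices, so although Theorem \ref{TT} is phrased as enlarging $W_1$, it may be applied to enlarge whichever $W_i$ we choose by relabeling. I would also note that the enlarged subspace $\spn\{W_i,z\}$ contains the original $W_i$, so containments are automatically preserved at every step.

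For part (1), the strategy is an induction on the total number of dimensions still to be added, $N=\sum_{i=1}^m (r_i-k_i)$. If $N=0$ there is nothing to prove. Otherwise I would pick an index $i$ whose current dimension is strictly below its target $r_i$; at the moment we apply the theorem that dimension lies between $k_i$ and $r_i-1$, hence is at most $r_i-1\le n-2$, so the orthogonal complement of the current subspace has dimension at least $2$ and the hypothesis of Theorem \ref{TT} is satisfied. Applying the theorem replaces the subspace by one of dimension one greater, containing the old one, with the whole family still doing phase retrieval. This lowers $N$ by one and preserves all containments, so the induction closes and produces subspaces $W_i'$ with $W_i\subset W_i'$, $\dim W_i'=r_i$, and $\{W_i'\}_{i=1}^m$ doing phase retrieval.

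For part (2), I would simply specialize to $r_i=n-1$ for every $i$. The standing hypothesis $k_i\le n-1$ gives $k_i\le r_i=n-1$, so part (1) applies and yields hyperplanes $W_i'\supset W_i$ with $\{W_i'\}_{i=1}^m$ doing phase retrieval; Theorem \ref{T} then forces $m\ge 2n-2$. The only point requiring genuine care, and the place where the constraint $r_i\le n-1$ is actually consumed, is the verification at each inductive step that the subspace being enlarged still has codimension at least $2$; this is exactly what forbids growing a subspace past a hyperplane, where Theorem \ref{TT} no longer applies. All the real difficulty lives in Theorem \ref{TT} itself, and the corollary is a bookkeeping iteration of it.
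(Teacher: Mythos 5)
Your proposal is correct and takes essentially the same route as the paper, which proves the corollary simply by ``applying the theorem over and over,'' i.e., by iterating Theorem \ref{TT} and then invoking Theorem \ref{T} for part (2). Your explicit induction on $N=\sum_{i=1}^m(r_i-k_i)$, with the check that the subspace being enlarged has current dimension at most $r_i-1\le n-2$ so that its orthogonal complement has dimension at least $2$, merely spells out the bookkeeping the paper leaves implicit.
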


\end{document}